\documentclass[12pt]{amsart}
\usepackage{amsmath,amsfonts,textcomp,pstricks}
\usepackage[dvips]{graphicx}

\textwidth =7.1in \textheight = 9 in \oddsidemargin = -0.4in
\evensidemargin = -0.4in \setlength{\parskip}{1pt plus 1pt minus
1pt}

\emergencystretch=30pt \tolerance=1000

\newtheorem{thm}{Theorem }

\newtheorem{lemma}{Lemma }

\newtheorem{prop}{Proposition }

\newtheorem{corollary}{Corollary }

\newtheorem{con}{Conjecture}

\theoremstyle{definition}
\newtheorem{deff}{Definition }

\newtheorem{rem}{Remark }

\def\RR{{\mathbb R}}

\def\QQ{{\mathbb Q}}
\def\ZZ{{\mathbb Z}}

\def\FF{{\mathbb F}}
\def\fq{{\mathbb{F}_q}}
\def\kk{{\bar{k}}}

\def \bra#1\ket {\mathop{\vphantom{#1}\left<\smash{#1}\right>}\nolimits}

 \DeclareMathOperator{\Hom}{Hom}
\DeclareMathOperator{\End}{End}

 \DeclareMathOperator{\rk}{rk}

 \DeclareMathOperator{\Np}{Np}\DeclareMathOperator{\Hp}{Hp}
 
\DeclareMathOperator{\ord}{ord}

\def\plim{\mathop{{\lim\limits_{\longleftarrow}}}\nolimits}

\renewcommand \phi {\varphi}
\renewcommand \rho {\varrho}

\begin{document}
\author{Sergey Rybakov}
\thanks{Supported in part by RFBR grants no. 07-01-00051, 07-01-92211 and 08-07-92495}
\address{Poncelet laboratory (UMI 2615 of CNRS and Independent University of
Moscow)}
\address{Institute for information transmission problems of the Russian Academy of Sciences}
\email{rybakov@mccme.ru, rybakov.sergey@gmail.com}%

\title[The groups of points on abelian varieties]
{The groups of points on abelian varieties\\ over finite fields}
\date{}
\keywords{abelian variety, the group of rational points, finite
field, Newton polygon, Hodge polygon}

\subjclass{14K99, 14G05, 14G15}

\begin{abstract}
Let $A$ be an abelian variety with commutative endomorphism
algebra over a finite field $k$. The $k$-isogeny class of $A$ is
uniquely determined by a Weil polynomial $f_A$ without multiple
roots. We give a classification of the groups of $k$-rational
points on varieties from this class in terms of Newton polygons of
$f_A(1-t)$.
\end{abstract}

\maketitle

\section{Introduction}
Let $A$ be an abelian variety of dimension $g$ over a finite field
$k=\fq$, and let $A(k)$ be the group of $k$-rational points on
$A$. Tsfasman~\cite{Ts} classified all possible groups $A(k)$,
where $A$ is an elliptic curve (see the English exposition
in~\cite[3.3.15]{TsVN}). Later the same result was independently
proved in~\cite{Ru2}~and~\cite{Vol} using ~\cite{Sch}. Xing
obtained a similar classification when $A$ is a supersingular
simple surface~\cite{Xi1} and~\cite{Xi2}. In this paper such a
description is obtained for the groups of $k$-rational points on
abelian varieties with commutative endomorphism algebra.

For an abelian group $H$ we denote by $H_\ell$ the $\ell$-primary
component of $H$. Let $A(k)=\oplus_\ell A(k)_\ell$. We associate
to $A(k)_\ell$ a polygon of special type.

\begin{deff}Let $0\leq m_1\leq m_2\leq\dots\leq m_r$ be nonnegative
integers, and let $H=\oplus_{i=1}^r\ZZ/\ell^{m_i}\ZZ$ be an
abelian group of order $\ell^m$. The {\it Hodge polygon
$\Hp_\ell(H,r)$ of a group $H$} is the convex polygon with
vertices $(i,\sum_{j=1}^{r-i}m_j)$ for $0\leq i\leq r$. It has
$(0,m)$ and $(r,0)$ as its endpoints, and its slopes are
$-m_r,\dots, -m_1$.
\end{deff}

Note that Hodge polygon of a group depends on a choice of $r$, but
the zero slope of $\Hp_\ell(A(k)_\ell,r)$ is not interesting, and
we can use the most convenient choice of $r$. Mostly we need the
case $r=2g$, and write $\Hp_\ell(H)=\Hp_\ell(H,2g)$ (here we
assume that $H$ can be generated by $2g$ elements). For example,
let $g=1$, and $H=\ZZ/\ell\ZZ\oplus\ZZ/\ell\ZZ$, then
$\Hp_\ell(H,2)$ is a straight line (see Picture~$1$). Let
$H=\ZZ/\ell^2\ZZ$ be cyclic, then $\Hp_\ell(H,2)$ has a zero slope
(see Picture~$2$).

\begin{pspicture}(300pt,110pt)

\psline[linewidth=0.3pt](80pt,30pt)(10pt,30pt)(10pt,100pt)
\psline[linewidth=1.2pt](70pt,30pt)(10pt,90pt)
%,showpoints=true

\pscircle*(10pt,30pt){1.5pt} \pscircle*(40pt,30pt){1.5pt}
\pscircle*(40pt,60pt){1.5pt} \pscircle*(10pt,90pt){1.5pt}
\pscircle*(70pt,30pt){1.5pt}

\rput(5pt,23pt){0} \rput(37pt,23pt){1} \rput(67pt,23pt){2}
\rput(3pt,87pt){2}

\rput(40pt,5pt){Pic. 1}

\psline[linewidth=0.3pt](180pt,30pt)(110pt,30pt)(110pt,100pt)
\psline[linewidth=1.2pt](170pt,30pt)(140pt,30pt)(110pt,90pt)

\pscircle*(110pt,30pt){1.5pt} \pscircle*(140pt,30pt){1.5pt}
\pscircle*(110pt,90pt){1.5pt} \pscircle*(170pt,30pt){1.5pt}

\rput(105pt,23pt){0} \rput(137pt,23pt){1} \rput(167pt,23pt){2}
\rput(103pt,87pt){2}

\rput(140pt,5pt){Pic. 2}

\psline[linewidth=0.3pt](280pt,30pt)(210pt,30pt)(210pt,100pt)
\psline[linewidth=1.2pt](270pt,30pt)(240pt,37pt)(210pt,90pt)
\psline[linewidth=1.2pt](270pt,30pt)(240pt,45pt)(210pt,90pt)

\pscircle*(210pt,30pt){1.5pt} \pscircle*(240pt,30pt){1.5pt}
\pscircle*(240pt,37pt){1.5pt} \pscircle*(240pt,45pt){1.5pt}
\pscircle*(210pt,90pt){1.5pt} \pscircle*(270pt,30pt){1.5pt}

\rput(205pt,23pt){0} \rput(237pt,23pt){1} \rput(267pt,23pt){2}
\rput(203pt,87pt){2}

\rput(240pt,5pt){Pic. 3} \rput(270pt,55pt){$\ord_\ell(b-2)$}

\end{pspicture}

Note also that isomorphism class of $H$ depends only on
$\Hp_\ell(H)$. For a polynomial $P\in\ZZ[t]$ we denote by
$\Np_\ell(P)$ the Newton polygon of $P$ with respect to $\ell$
(see Section $3$ for the precise definition). The aim of this
paper is to prove the following theorem.

\begin{thm}\label{rp}
Let $A$ be an abelian variety over a finite field with Weil
polynomial $f_A$ {\rm (}see Section $2$ for the definition of Weil
polynomial{\rm )}. Suppose $f_A$ has no multiple roots {\rm (}i.e.
endomorphism algebra $\End^{\circ}(A)$ is commutative{\rm)}. Let
$G$ be an abelian group of order $f_A(1)$. Then $G$ is a group of
points on some variety in the isogeny class of $A$ if and only if
$\Np_\ell(f_A(1-t))$ lies on or above $\Hp_\ell(G_\ell)$ for any
prime number $\ell$.
\end{thm}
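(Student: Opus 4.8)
The plan is to reduce the statement, prime by prime, to a question about $F$-stable lattices, to dispatch the ``only if'' direction by the inequality ``Newton lies above Hodge'', and to dispatch the ``if'' direction by an explicit construction of lattices with prescribed cokernel; the latter is the only substantial point.

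\textbf{Reduction to a local problem.} Let $B$ be isogenous to $A$ and $\ell\neq p=\ch k$. Applying $1-F$ to $0\to T_\ell(B)\to V_\ell(B)\to B[\ell^\infty]\to0$ and using that $1-F$ is invertible on $V_\ell(B)$ (since $f_A(1)\neq0$) gives $B(k)_\ell\cong\coker(1-F\mid T_\ell(B))$. As $B$ ranges over the isogeny class, $T_\ell(B)$ ranges over \emph{every} $F$-stable $\ZZ_\ell$-lattice in $V_\ell(A)$ (divide $B$ by the finite subgroup scheme attached to a sublattice), the choices at distinct primes are independent, and commutativity of $\End^\circ$ is an isogeny invariant, so one stays within the hypotheses. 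For $\ell=p$ only the maximal étale quotient of $B[p^\infty]$ has $k$-points, and $1-F$ is étale, so $B(k)_p\cong\coker(1-F\mid L)$ where $L$ is a $\ZZ_p$-lattice on which $F$ acts $\ZZ_p$-\emph{linearly} with characteristic polynomial the unit-root factor $f_A^{\mathrm u}$ of $f_A$, and $L$ ranges over all $F$-stable such lattices; since $\ord_p(1-\alpha)=0$ for every Weil root $\alpha$ with $\ord_p(\alpha)>0$, replacing $f_A$ by $f_A^{\mathrm u}$ only discards zero slopes of $\Np_p(f_A(1-t))$, which reappear upon taking width $2g$ in the definition of $\Hp_p$. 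Thus it suffices to prove: for a $\QQ_\ell$-space $V$ and a linear operator $u=1-F$ with characteristic polynomial $P$ (the relevant factor of $f_A(1-t)$), a finite abelian $\ell$-group $M$ of order $\ell^{\ord_\ell P(0)}$ occurs as $\coker(u\mid T)$ for some $u$-stable lattice $T$ if and only if $\Hp_\ell(M,\dim V)$ lies on or below $\Np_\ell(P)$.

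\textbf{Necessity.} Let $T\cong\ZZ_\ell^{\,n}$ and $u\colon T\to T$ injective with elementary divisors $\ell^{h_1}\mid\dots\mid\ell^{h_n}$, so $\coker u\cong\bigoplus_i\ZZ/\ell^{h_i}$ and $\Hp_\ell(\coker u,n)$ has vertices $(i,\sum_{j\le n-i}h_j)$. The $k$-th determinantal divisor of $u$ equals $\ell^{h_1+\dots+h_k}$ and divides every $k\times k$ minor of $u$; those minors are the entries of $\Lambda^k u$ in a basis of $\Lambda^k T$ induced from a basis of $T$, so every eigenvalue of $\Lambda^k u$ has valuation $\ge h_1+\dots+h_k$. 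Since products of $k$ distinct eigenvalues of $u$ occur among them, the $k$ smallest eigenvalue valuations of $u$ sum to at least $h_1+\dots+h_k$, for all $k$ and with equality at $k=n$. This says exactly that $\Np_\ell(\det(t-u))$ lies on or above $\Hp_\ell(\coker u,n)$, with the same endpoints. Applying this with $u=1-F$, so that $\det(t-u)$ is the relevant factor of $f_A(1-t)$, gives the ``only if'' direction.

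\textbf{Sufficiency.} Fix $\ell$ and write $\QQ_\ell[F]=\prod_jE_j$ (a product of local fields, since $f_A$ has no multiple roots), $V=\bigoplus_jV_j$ with $V_j\cong E_j$ free of rank one, $F$ acting on $V_j$ as multiplication by $v_j$, and $w_j=1-v_j$. Given an admissible $M$ one must produce an $F$-stable lattice $T\subseteq V$ with $T/(1-F)T\cong M$. The relevant families of cokernels are connected by \emph{elementary surgeries}: if $T'\subseteq T$ is $F$-stable of index $\ell$, then $T$ and $T'$ differ by a rank-one finite $k$-group scheme of order $\ell$ (a form of $\ZZ/\ell$, of $\mu_\ell$, or---only when $\ell=p$---of $\alpha_p$), and the two cokernels differ by one box in their Hodge polygons. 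Starting from a convenient lattice, for instance $\bigoplus_j\OO_{E_j}$, whose cokernel $\bigoplus_j\OO_{E_j}/w_j\OO_{E_j}$ is read off the ramification and residue data of the $E_j$, one shows that a finite chain of such surgeries realizes every integral convex polygon with the prescribed endpoints lying on or below $\Np_\ell(P)$; some of these surgeries necessarily couple two blocks $E_j$, or deform inside a single ramified block, producing non-split lattices. Because a finite abelian $\ell$-group of given order generated by at most $\dim V$ elements is determined by its Hodge polygon (the observation, recalled after the Definition, that the isomorphism class of a finite abelian $\ell$-group depends only on that polygon), this realizes every admissible $M$; the case $\ell=p$ is identical on the unit-root space. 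Choosing such lattices at all $\ell$---equal to the Tate, resp.\ Dieudonné, lattice of $A$ for all $\ell$ not dividing $f_A(1)$---and gluing gives $B$ in the isogeny class with $B(k)\cong G$.

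\textbf{Where the difficulty lies.} The serious work is the sufficiency: $\ZZ_\ell[F]$ is in general neither maximal nor local, the surgeries are not monotone on Hodge polygons, and the non-extremal polygons genuinely require non-split lattices mixing the local factors $E_j$, so in the ramified case one must control precisely how the $\ZZ_\ell$-elementary divisors of $T/(1-F)T$ move under an index-$\ell$ change of lattice. Proving that these moves connect exactly the polygons on or below $\Np_\ell(P)$---reaching all of them, and never escaping that region---is the crux; the reduction, the necessity, the bookkeeping at $p$, and the gluing are comparatively formal once the local picture is understood.
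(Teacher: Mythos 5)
Your reduction to the lattice problem at each prime and your necessity argument are both sound; in fact the exterior-power/determinantal-divisor computation you give is a proof of the Newton-above-Hodge inequality that the paper simply cites as a black box (Kedlaya 4.3.8, Berthelot--Ogus 8.40), so that half of your write-up is, if anything, more self-contained than the original. The architecture (local reduction, necessity from Newton-above-Hodge, sufficiency by constructing lattices, gluing over the primes dividing $f_A(1)$ by a chain of $\ell_i$-isogenies) matches the paper's.

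The gap is in the sufficiency direction, and you have named it yourself: ``proving that these moves connect exactly the polygons on or below $\Np_\ell(P)$ \dots is the crux.'' Everything in that section is a plan, not a proof. Two specific things are missing. First, the assertion that if $T'\subset T$ is $F$-stable of index $\ell$ then the cokernels of $1-F$ on $T$ and $T'$ ``differ by one box in their Hodge polygons'' is unjustified and not obviously true: an index-$\ell$ change of lattice can rearrange the elementary divisors of $1-F$ in more complicated ways, and you give no computation controlling this. Second, even granting that claim, you would still need a connectivity argument showing the achievable polygons form exactly the set of integral convex polygons between the two extremes, together with an identification of which extremes are achieved. The paper avoids the entire surgery analysis with a short explicit construction that you should either reproduce or replace with an equally explicit one: since $f$ is squarefree, $V\cong\QQ_\ell[t]/(f)$ with $E$ acting as multiplication by $x$; writing $f=\sum_i a_it^i$, $G=\oplus_i\ZZ/\ell^{m_i}\ZZ$ with $m_1\le\dots\le m_d$ and $M(s)=m_1+\cdots+m_s$, the hypothesis that $\Np_\ell(f)$ lies on or above $\Hp_\ell(G,d)$ forces $\ell^{M(s)}\mid a_{d-s}$, so the elements $v_s=\ell^{-M(s)}\bigl(a_dx^s+\sum_{j\le s}a_{d-j}x^{s-j}\bigr)$ are integral over $\ZZ_\ell[x]$-combinations of $1$; they span a lattice $T$ satisfying $xv_{s-1}=\ell^{m_s}(v_s-u_s)$ with $u_s=\ell^{-M(s)}a_{d-s}\in\ZZ_\ell\cdot 1\subset T$, and this one identity yields both $xT\subset T$ and $T/xT\cong G$. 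Without either that construction or a complete proof of your surgery claims, the sufficiency direction --- which is the whole content of the theorem beyond the cited inequality --- is not established.
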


As an example, we prove the following corollary of
Theorem~\ref{rp}. Originally it was proved in~\cite{Ts}. Later the
same result was independently proved in~\cite{Ru2}~and~\cite{Vol}
using ~\cite{Sch}.

\begin{corollary}\label{cor1}
Let $N=1-b+q$ be the order of $B(k)$ for an elliptic curve $B$.
Then $G=B(k)$ satisfies the following conditions.
\begin{enumerate}
    \item If $b\neq\pm2\sqrt{q}$, then $G\cong
    \ZZ/n_1\ZZ\oplus\ZZ/n_2\ZZ$, where $N=n_1n_2$, and $n_1$ divides
    $b-2$ and $n_2$.
    \item If $b=\pm2\sqrt{q}$, then $G\cong
    (\ZZ/n_1\ZZ)^2$, and $N=n_1^2$.
\end{enumerate}
If a finite commutative group $G$ satisfies $(1)$ or $(2)$, then
there exists an elliptic curve $B'$ isogenous to $B$ such that
$B'(k)\cong G$.
\end{corollary}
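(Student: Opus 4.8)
The plan is to deduce Corollary~\ref{cor1} from Theorem~\ref{rp} by specializing it to dimension $g=1$, together with a separate elementary argument for the case $b=\pm2\sqrt q$, which lies outside the hypotheses of the theorem. To begin, for an elliptic curve $B$ the Weil polynomial is $f_B(t)=t^2-bt+q$, so $f_B(1)=1-b+q=N$ and, as a polynomial in $t$,
\[
f_B(1-t)=(1-t)^2-b(1-t)+q=t^2+(b-2)t+N .
\]
Also $B(k)_\ell$ embeds in $B[\ell^\infty](\kk)\cong(\QQ_\ell/\ZZ_\ell)^2$ for $\ell$ different from the characteristic $p$ of $k$, and $B(k)_p$ is cyclic, so $B(k)$ is generated by at most $2g=2$ elements; the same is true of any group $G$ of order $N$ satisfying $(1)$ or $(2)$. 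Thus the relevant Hodge polygons are $\Hp_\ell(G_\ell)=\Hp_\ell(G_\ell,2)$.

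Consider first the case $b\neq\pm2\sqrt q$. Then the discriminant $b^2-4q$ is nonzero, so $f_B$ has no multiple root and $\End^\circ(B)$ is commutative; Theorem~\ref{rp} applies and says that a group $G$ of order $N$ is the group of points on some curve isogenous to $B$ if and only if $\Np_\ell(f_B(1-t))$ lies on or above $\Hp_\ell(G_\ell,2)$ for every prime $\ell$. Both polygons run from $(0,\ord_\ell N)$ to $(2,0)$ and are piecewise linear with a break only at $x=1$, so this containment reduces to the single inequality at the abscissa $x=1$. Writing $G_\ell\cong\ZZ/\ell^{m_1}\ZZ\oplus\ZZ/\ell^{m_2}\ZZ$ with $m_1\le m_2$, the value of $\Hp_\ell(G_\ell,2)$ at $x=1$ is $m_1$, while the Newton polygon of $t^2+(b-2)t+N$ has value $\min\!\big(\ord_\ell(b-2),\tfrac12\ord_\ell N\big)$ there (this is Picture~$3$). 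Since $m_1\le\tfrac12(m_1+m_2)=\tfrac12\ord_\ell N$ holds automatically, the condition over all $\ell$ collapses to $m_1\le\ord_\ell(b-2)$ for every $\ell$. Putting $n_i=\prod_\ell\ell^{m_i}$, this says exactly that $G\cong\ZZ/n_1\ZZ\oplus\ZZ/n_2\ZZ$ with $n_1n_2=N$, $n_1\mid n_2$, and $n_1\mid b-2$; that is, $G$ satisfies $(1)$. Taking $G=B(k)$ gives the first assertion of the corollary, and reading the equivalence in the other direction gives the converse for groups satisfying $(1)$.

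It remains to treat $b=\pm2\sqrt q$. Then $\sqrt q$ is a rational integer, $\pi=\pm\sqrt q$ is an integer central in $\End^\circ(B)$, and $B$ is supersingular; moreover $N=1-b+q=(1\mp\sqrt q)^2$, so $n_1:=|1\mp\sqrt q|$ satisfies $N=n_1^2$, and $n_1$ is prime to $p$. For $\ell\neq p$ the endomorphism $\pi-1$ acts on $B[\ell^\infty](\kk)\cong(\QQ_\ell/\ZZ_\ell)^2$ as multiplication by the integer $\pm\sqrt q-1$, of absolute value $n_1$, so that $B(k)_\ell\cong(\ZZ/\ell^{\ord_\ell n_1}\ZZ)^2$; and $B(k)_p=0$ since $B$ is supersingular. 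Hence $B(k)\cong(\ZZ/n_1\ZZ)^2$, which is condition $(2)$. The same computation applies to every curve $B'$ isogenous to $B$, because it uses only that the Frobenius of $B'$ is the same central integer $\pm\sqrt q$; and any group $G$ satisfying $(2)$ is necessarily $(\ZZ/\sqrt N\ZZ)^2=(\ZZ/n_1\ZZ)^2$, hence realized by every such $B'$.

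The real content is contained in Theorem~\ref{rp}; for the corollary the only genuinely new input is the supersingular case $b=\pm2\sqrt q$, where a rational integral Frobenius is seen to force $B(k)$ to be a square group with the stated invariants. In the main case the step needing care is the reduction of the polygon containment at all primes to the single numerical condition $n_1\mid b-2$ --- in particular, checking in the converse direction that a factorization $N=n_1n_2$ with $n_1\mid n_2$ and $n_1\mid b-2$ really produces, prime by prime, a group whose Hodge polygon lies on or below $\Np_\ell(f_B(1-t))$.
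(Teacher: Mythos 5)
Your proposal is correct and follows essentially the same route as the paper: for $b\neq\pm2\sqrt q$ it applies Theorem~\ref{rp} and reduces the polygon comparison for $f_B(1-t)=t^2+(b-2)t+N$ to the single condition $m_1\le\ord_\ell(b-2)$ at the abscissa $x=1$ (the paper's Picture~3), and for $b=\pm2\sqrt q$ it observes that Frobenius is the rational integer $\pm\sqrt q$, forcing $B(k)\cong(\ZZ/n_1\ZZ)^2$. You merely spell out more of the intermediate steps (the endpoint matching, the automatic inequality $m_1\le\tfrac12\ord_\ell N$, and the converse direction) than the paper does.
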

\begin{proof}
If $b\neq\pm2\sqrt{q}$, then $f_B$ has no multiple roots. Fix a
prime $\ell$. Let
$B(k)_\ell\cong\ZZ/\ell^{m_1}\ZZ\oplus\ZZ/\ell^{m_2}\ZZ$, and let
$m_1\leq m_2$. Since $f_B(1-t)=t^2+(b-2)t+(1-b+q)$, we have by
Theorem~\ref{rp} that $m_1\leq\ord_\ell(b-2)$ (see Picture $3$).
Equivalently, $\ell^{m_1}$ divides $b-2$, and the first case
follows. The second case is obvious since $F$ acts on $T_\ell$ as
multiplication by $b/2=\pm\sqrt{q}$. Conversely, if $G$ satisfies
$(1)$, then the existence of an elliptic curve $B'$ with
$B'(k)\cong G$ follows from Theorem~\ref{rp} combined with the
inequality $m_1 \leq \ord_\ell(b-2)$.
\end{proof}

\begin{rem}
Originally the Tsfasman theorem relies on the Waterhouse
classification of isogeny classes of elliptic curves~\cite{Wa} and
has $6$ cases. We make the statement shorter, but it becomes a
little different. For example, for a supersingular curve $B$ with
commutative endomorphism algebra it is not immediately clear that
$B(k)$ is cyclic modulo $2$-torsion.
\end{rem}

Corollary~\ref{cor1} allows us to construct an example of an
isogeny class with the following properties:
\begin{enumerate}\item The Weil polynomial $f$ has multiple roots;
\item not any abelian group $G$ of order $f(1)$ such that
$\Np_\ell(f(1-t))$ lies on or above $\Hp_\ell(G_\ell)$ for any
prime number $\ell$ is a group of points on some variety from the
isogeny class.
\end{enumerate}
Indeed, take $f(t)=(t-3)^2$. Then by Corollary~\ref{cor1} for any
$A$ from the isogeny class $A(\FF_9)\cong(\ZZ/2\ZZ)^2$. It follows
that the isogeny class does not contain an elliptic curve, whose
group of points is cyclic.

The author is grateful to M.A.~Tsfasman and Yu.G.~Zarhin for their
attention to this work and to reviewer for providing useful
corrections and comments on the paper.

\section{Preliminaries}
Throughout this paper $k$ is a finite field $\fq$ of
characteristic $p$. Let $A$ and $B$ be abelian varieties over $k$.
It is well known that the group $\Hom(A,B)$ of $k$-homomorphisms
from $A$ to $B$ is finitely generated and torsionfree. We use the
following notation: $\Hom^{\circ}(A,B)=\Hom(A,B)\otimes_{\ZZ}\QQ$,
and $\End^{\circ}(A)=\End(A)\otimes_{\ZZ}\QQ$. The algebra
$\End^{\circ}(A)$ contains the Frobenius endomorphism $F$,
 and its center is equal to $\QQ[F]$. Thus
$\End^{\circ}(A)$ is commutative if and only if
$\End^{\circ}(A)=\QQ[F]$.

Let $A$ be an abelian variety of dimension $g$ over $k$, and let
$\kk$ be an algebraic closure of $k$. For a natural number $m$
denote by $A_m$ the kernel of multiplication by $m$ in $A(\kk)$.
Let $A[m]$ be the group subscheme of $A$, which is the kernel of
multiplication by $m$. By definition $A_m=A[m](\kk)$. Let
$T_\ell(A) = \plim A_{\ell^r}$ be the Tate module, and
$V_\ell(A)=T_\ell(A)\otimes_{\ZZ_\ell}\QQ_\ell$ be the
corresponding vector space over $\QQ_\ell$. If $\ell\ne p$, then
$T_\ell(A)$ is a free $\ZZ_\ell$-module of rank $2g$. The
Frobenius endomorphism $F$ of $A$ acts on the Tate module by a
semisimple linear operator, which we also denote by $F:
T_\ell(A)\to T_\ell(A)$. The characteristic polynomial
$$
f_A(t) = \det(t-F|T_\ell(A))
$$
is called a {\it Weil polynomial of $A$}. It is a monic polynomial
of degree $2g$ with rational integer coefficients independent of
the choice of prime $\ell\ne p$. It is well known that for
isogenous varieties $A$ and $B$ we have $f_A(t)=f_B(t)$. Moreover,
Tate proved that the isogeny class of abelian variety is
determined by its characteristic polynomial, that is
$f_A(t)=f_B(t)$ implies that $A$ is isogenous to $B$. The
polynomial $f_A$ has no multiple roots if and only if the
endomorphism algebra $\End^{\circ}(A)$ is commutative (see
\cite{WM}).

If $\ell=p$, then $T_p(A)$ is called a {\it physical Tate module}.
In this case, $f_A(t)=f_1(t)f_2(t)$, where $f_1,f_2\in\ZZ_p[t]$,
and $f_1(t)=\det(t-F|T_p(A))$. Moreover $d=\deg f_1\leq g$, and
$f_2(t)\equiv t^{2g-d}\mod p$ (see~\cite{De}).

We say that $\phi:B\to A$ is an {\it $\ell$-isogeny}, if degree of
$\phi$ is a power of $\ell$. The following lemma is well known,
but we prove it here for the sake of completeness.

\begin{lemma}
\label{lem_on_Tate_module} If $\phi:B\to A$ is an isogeny then
$T_\ell(\phi):T_\ell(B)\to T_\ell(A)$ is a $\ZZ_\ell$-linear
embedding commuting with the action of the Frobenius endomorphisms
and if $T$ denotes its image then
\begin{equation}\label{t}
F(T)\subset T\quad\text{and}\quad T\otimes_{\ZZ_\ell}\QQ_\ell
\cong T_\ell(A)\otimes_{\ZZ_\ell}\QQ_\ell.
\end{equation}
Conversely, if $T\subset T_\ell(A)$ is a $\ZZ_\ell$-submodule such
that $(\ref{t})$ holds, then there exists an abelian variety $B$
over $k$, and an $\ell$-isogeny $\phi:B\to A$ such that
$T_\ell(\phi)$ induces an isomorphism $T_\ell(B)\cong T$.
\end{lemma}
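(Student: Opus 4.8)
The plan is to prove the two directions of Lemma~\ref{lem_on_Tate_module} separately, treating them as the standard dictionary between abelian varieties and Tate modules (the $\ell\ne p$ case being classical, and the $\ell=p$ case requiring Dieudonn\'e modules; I would remark that the argument below is phrased for $\ell\ne p$ and note that the $p$-case is analogous using the theory of \cite{De}). For the first assertion: given an isogeny $\phi\colon B\to A$, functoriality of the Tate module gives a $\ZZ_\ell$-linear map $T_\ell(\phi)$ commuting with Frobenius, since $F$ is defined over $k$ and $\phi$ is a $k$-homomorphism. Injectivity follows because $\ker\phi$ is finite, so $\ker\phi$ meets $B_{\ell^r}$ in a bounded subgroup; passing to the inverse limit, any element of $T_\ell(B)$ mapping to $0$ is infinitely $\ell$-divisible in a finitely generated $\ZZ_\ell$-module, hence $0$. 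Dualizing, choose an isogeny $\psi\colon A\to B$ with $\psi\phi=[n]$ for some $n$; then $T_\ell(\phi)\circ T_\ell(\psi)=[n]$ on $T_\ell(A)$, which shows $T\otimes\QQ_\ell=V_\ell(A)$ after inverting $\ell$ (if $\ell\nmid n$ the map is already an isomorphism; in general $nT_\ell(A)\subset T\subset T_\ell(A)$). The condition $F(T)\subset T$ is immediate from Frobenius-equivariance together with $F(T_\ell(B))\subset T_\ell(B)$.

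For the converse, the key step is to produce $B$ as a quotient of $A$ by a suitable finite subgroup scheme. Given $T\subset T_\ell(A)$ with $F(T)\subset T$ and $T\otimes\QQ_\ell=V_\ell(A)$, pick $r$ with $\ell^r T_\ell(A)\subset T$, and set $H_r = T/\ell^r T_\ell(A)$, viewed inside $T_\ell(A)/\ell^r T_\ell(A)=A_{\ell^r}$. Because $F(T)\subset T$ and $F$ preserves $T_\ell(A)$, the subgroup $H_r\subset A_{\ell^r}$ is stable under Frobenius, hence is the group of $\kk$-points of a finite \'etale subgroup scheme $\mathcal{H}\subset A[\ell^r]$ defined over $k$. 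Form $B = A/\mathcal{H}$ with quotient isogeny $\pi\colon A\to B$; this is defined over $k$. Now I would identify $T_\ell(B)$: from the isogeny $[\ell^r]\colon A\to A$ factoring as $A\xrightarrow{\pi}B\xrightarrow{\pi'}A$ (since $\mathcal H\subset A[\ell^r]$, the map $[\ell^r]$ kills $\mathcal H$ and factors through $\pi$), one gets an $\ell$-isogeny $\phi=\pi'\colon B\to A$ with $\phi\circ\pi=[\ell^r]$. Chasing Tate modules, $T_\ell(\phi)$ is injective with image exactly the preimage in $T_\ell(A)$ of $\ker(\pi\colon A_{\ell^r}\to B_{\ell^r})$ suitably scaled; unwinding, this image is precisely $T$. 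I would write this last identification carefully using the snake lemma applied to $0\to T_\ell(B)\to T_\ell(A)\to T_\ell(A)/T_\ell(\phi)T_\ell(B)\to 0$ compared with the finite group $\ker\phi$.

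The main obstacle is the clean identification $T_\ell(\phi)T_\ell(B)=T$ in the converse, i.e.\ checking that the $B$ we constructed really recovers the prescribed lattice $T$ and not merely some lattice $\QQ_\ell$-commensurable with it. The cleanest route is the exact sequence $0\to T_\ell(B)\xrightarrow{T_\ell(\phi)} T_\ell(A)\to \ker(\phi)(\kk)\to 0$ (valid since $\phi$ is an $\ell$-isogeny, so $\ker\phi$ is $\ell$-primary), which shows $T_\ell(A)/T_\ell(\phi)T_\ell(B)\cong\ker(\phi)(\kk)$ canonically; then one computes $\ker(\phi)(\kk)$ directly from the construction $\phi=\pi'$, $\phi\circ\pi=[\ell^r]$, namely $\ker\pi'\cong A_{\ell^r}/H_r = T_\ell(A)/T$, giving the desired equality of submodules (not just subspaces). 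I would also note the compatibility with Frobenius throughout is automatic because every morphism in sight is a $k$-morphism, and conclude by remarking that in the case $\ell=p$ one runs the identical argument with $T_p$ replaced by the Dieudonn\'e module, invariance under Frobenius being replaced by stability under $F$ and $V$, citing \cite{De} for the equivalence of categories that makes the quotient construction legitimate.
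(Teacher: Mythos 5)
Your argument for $\ell\ne p$ is essentially the paper's own: both proofs take $r$ with $\ell^r T_\ell(A)\subset T$, view $T/\ell^r T_\ell(A)$ as a Frobenius-stable subgroup of $A_{\ell^r}$, pass to the corresponding subgroup scheme, form the quotient abelian variety, and obtain $\phi$ as the complementary isogeny whose kernel has $\kk$-points $T_\ell(A)/T$; the paper is exactly as terse as you are about checking that the image of $T_\ell(\phi)$ is the submodule $T$ itself and not merely a lattice with isomorphic quotient, so no complaint there.

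The one substantive divergence is the case $\ell=p$, and there your sketch does not quite address the statement being proved. The lemma at $p$ concerns the \emph{physical} Tate module $T_p(A)=\plim A_{p^r}$ (of rank $d\le g$), not the Dieudonn\'e module, so ``run the identical argument with $T_p$ replaced by the Dieudonn\'e module, stability under $F$ and $V$'' replaces the object in the statement by a different one and leaves unproved what is actually claimed about $T_p$. The paper instead stays with $T_p$ and uses the canonical connected--\'etale decomposition $A[p^k]=G_r\oplus G_l$ from \cite{De}: it quotients by $G\oplus G_l$ (not just by the \'etale subgroup $G$ with $G(\kk)=T/p^kT_p(A)$), so that the complementary isogeny $\phi:B\to A$ has \emph{reduced} kernel $G_r/G$ with $(G_r/G)(\kk)=T_p(A)/T$, and the computation of $\coker T_p(\phi)$ goes through exactly as in the \'etale case. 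If you want to follow your $[\ell^r]$-factorization recipe verbatim at $p$, you must either also kill the local part $G_l$, or else justify separately that a local kernel component does not disturb the exact sequence $0\to T_p(B)\to T_p(A)\to(\ker\phi)(\kk)\to 0$ for physical Tate modules (it does not, since $(\ker\phi)(\kk)$ only sees the reduced part and $B(\kk)\to A(\kk)$ is still surjective, but this needs to be said). As written, the $p$-adic paragraph is the gap.
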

\begin{proof}
The first part is obvious. The second part can be proved as
follows. First, (\ref{t}) implies that there exists $k\in\ZZ$ such
that $\ell^kT_\ell(A)\subset T$. Further, note that
$T_\ell(A)/\ell^kT_\ell(A)\cong A_{\ell^k}$. Hence the group
$T/\ell^kT_\ell(A)$ can be considered as a subgroup in
$A_{\ell^k}\subset A(\kk)$. Moreover, since $F(T)\subset T$ it
follows that $T/\ell^kT_\ell(A)$ is invariant under the action of
the Frobenius, and thus defines a certain group subscheme $G$ of
$A$. If $\ell\neq p$, we define $B=A/G$. If $\ell=p$, there is the
canonical decomposition $A[p^k]=G_r\oplus G_l$, where $G_r$ is
reduced and $G_r(\kk)=A_{p^k}$, and $G_l(\kk)=0$~\cite{De}. In
this case, we define $B=A/(G\oplus G_l)$. It is clear that $B$ is
defined over $k$, and $A\cong B/G'$, where $G'$ is reduced and
$G'(\kk)=T_\ell(A)/T$. This gives a desired isogeny $\phi:B\to A$.
\end{proof}

\section{The groups of points}
\label{np} Let $Q(t)=\sum_i Q_i t^i$ be a polynomial of degree $d$
over $\QQ_\ell$, and let $Q(0)=Q_0\neq 0$. Take the lower convex
hull of the points $(i,\ord_\ell(Q_i))$ for $0\leq i\leq d$ in
$\RR^2$. The boundary of this region without vertical lines is
called {\it the Newton polygon $\Np_\ell(Q)$ of $Q$}. Its vertices
have integer coefficients, and $(0,\ord_\ell(Q_0))$ and
$(d,\ord_\ell(Q_d))$ are its endpoints.

Let $E$ be an injective endomorphism of $T_\ell(A)$, and let
$H=T_\ell(A)/E T_\ell(A)$ be its cokernel, which is a finite
$\ell$-group. Define the {\it Hodge polygon of the endomorphism
$E$} as the Hodge polygon $\Hp_\ell(H,\rk T_\ell(A))$ of the group
$H$. We need the following simple result.

\begin{prop}\label{prop1}
The Hodge polygon $\Hp_\ell(A(k)_\ell,\rk T_\ell(A))$ is equal to
the Hodge polygon of $1-F$.
\end{prop}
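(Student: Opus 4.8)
The plan is to identify $A(k)_\ell$ with the cokernel of $1-F$ acting on $T_\ell(A)$, and then read off the equality of polygons directly from the definitions. First recall that $A(k)=A(\kk)^F$ is finite, so $A(k)_\ell$ is the group of $F$-fixed points of the $\ell$-divisible group $A[\ell^\infty](\kk)=\varinjlim_r A_{\ell^r}$, i.e. $A(k)_\ell=\ker(1-F\mid A[\ell^\infty](\kk))$. For $\ell\ne p$ one has canonically $A[\ell^\infty](\kk)\cong V_\ell(A)/T_\ell(A)$; for $\ell=p$ the $\kk$-points of $A[p^r]$ come only from the reduced part, so again $A[p^\infty](\kk)\cong V_p(A)/T_p(A)$ with $T_p$ the physical Tate module. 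Thus the argument is uniform in $\ell$, with $\rk T_\ell(A)=2g$ when $\ell\ne p$ and $\rk T_\ell(A)=d$ when $\ell=p$.

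Next, the eigenvalues of $F$ on $V_\ell(A)$ are Weil numbers of absolute value $\sqrt q\ne 1$, so $1-F$ is invertible on $V_\ell(A)$ and injective on $T_\ell(A)$; concretely $\det(1-F\mid T_\ell(A))$ equals $f_A(1)\ne 0$ for $\ell\ne p$, and $f_1(1)\ne 0$ for $\ell=p$ since $1$ is not a root of $f_1\mid f_A$. Applying the snake lemma to the endomorphism $1-F$ of the short exact sequence
$$0\lra T_\ell(A)\lra V_\ell(A)\lra A[\ell^\infty](\kk)\lra 0$$
and using $\ker(1-F\mid T_\ell(A))=\ker(1-F\mid V_\ell(A))=\coker(1-F\mid V_\ell(A))=0$, the six-term exact sequence collapses to an isomorphism
$$A(k)_\ell=\ker\bigl(1-F\mid A[\ell^\infty](\kk)\bigr)\;\xrightarrow{\ \sim\ }\;\coker\bigl(1-F\mid T_\ell(A)\bigr)=T_\ell(A)/(1-F)T_\ell(A).$$
In particular $A(k)_\ell$ is generated by $\rk T_\ell(A)$ elements, so $\Hp_\ell(A(k)_\ell,\rk T_\ell(A))$ is defined.

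Finally, by the definition given just before the statement, the Hodge polygon of the injective endomorphism $1-F$ of $T_\ell(A)$ is precisely $\Hp_\ell\bigl(T_\ell(A)/(1-F)T_\ell(A),\ \rk T_\ell(A)\bigr)$. Since the Hodge polygon of a finite $\ell$-group (for a fixed value of $r$) depends only on its isomorphism class, the isomorphism of the previous paragraph gives
$$\Hp_\ell\bigl(A(k)_\ell,\rk T_\ell(A)\bigr)=\Hp_\ell\bigl(T_\ell(A)/(1-F)T_\ell(A),\rk T_\ell(A)\bigr),$$
which is the Hodge polygon of $1-F$, as claimed.

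The only step requiring care is the case $\ell=p$: one must use that $A[p^r](\kk)$ is the group of $\kk$-points of the reduced part of $A[p^r]$, so that $A[p^\infty](\kk)\cong V_p(A)/T_p(A)$ for the physical Tate module, and that $1-F$ is still injective there because $f_1(1)\ne 0$. Both follow from the facts recalled in Section $2$ (Deligne's decomposition of $A[p^r]$ and $f_1\mid f_A$), so no additional work is needed and the proof is genuinely uniform in $\ell$.
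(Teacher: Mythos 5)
Your proof is correct and follows essentially the same route as the paper: both identify $A(k)_\ell$ with $\coker(1-F\mid T_\ell(A))$ via the snake lemma and the injectivity of $1-F$ on $T_\ell(A)$, the only (cosmetic) difference being that you apply the snake lemma to $0\to T_\ell(A)\to V_\ell(A)\to A[\ell^\infty](\kk)\to 0$ while the paper uses the finite-level sequence $0\to T_\ell(A)\xrightarrow{\ell^N}T_\ell(A)\to A_{\ell^N}\to 0$. Your explicit treatment of the case $\ell=p$ via the reduced part and the physical Tate module is a welcome addition that the paper leaves implicit.
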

\begin{proof}
Since $A(k)_\ell$ is finite, there exists a positive integer $N$
such that $A(k)_\ell\subset A_{\ell^N}$. Clearly,
$A(k)_\ell=\ker(1-F:A_{\ell^N}\to A_{\ell^N})$. Apply $1-F$ to the short
exact sequence:
$$0\to T_\ell(A)\xrightarrow{\ell^N}T_\ell(A)\to A_{\ell^N}\to 0.$$
By snake lemma we get:
$$0=\ker(1-F:T_\ell(A)\to T_\ell(A))\to A(k)_\ell\to T_\ell(A)/(1-F)T_\ell(A)
\xrightarrow{0}T_\ell(A)/(1-F)T_\ell(A).$$ Thus $A(k)_\ell\cong
T_\ell(A)/(1-F)T_\ell(A)$, and the proposition follows.
\end{proof}

We see that the group $A(k)_\ell$ depends only on the action of
$F$ on the Tate module $T_\ell(A)$. In particular, order of $A(k)$
is $f_A(1)$. Thus we reduced our task to the following linear
algebra problem. We are given a $\QQ_\ell$-vector space $V$ of
finite positive dimension $d$ and an invertible linear operator
$E: V \to V$, whose characteristic polynomial $f(t)=\det (E-t|V)$
lies in $\ZZ_\ell[t]$. We want to describe all finite commutative
groups (up to isomorphism) of the form $T/E T$, where $T$ is an
arbitrary $E$-invariant $\ZZ_\ell$-lattice of rank $d$ in $V$. We
solve this problem under the additional assumption that $f$ has no
multiple roots.

The next statement establishes a connection between the Hodge
polygon of an endomorphism and the Newton polygon of its
characteristic polynomial.

\begin{thm}\cite[4.3.8]{Ke2008}\cite[8.40]{BO}
Let $E$ be an injective endomorphism of a free $\ZZ_\ell$-module
of finite rank. Let $f(t)=\det(E-t|V)$ be its characteristic
polynomial. Then $\Np_\ell(f)$ lies on or above the Hodge polygon
of $E$, and these polygons have same endpoints.
\end{thm}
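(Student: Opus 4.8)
The plan is to translate both polygons into statements about the elementary divisors of $E$, establish a valuation bound coefficient by coefficient, and then conclude by convexity.

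Write $M$ for the free $\ZZ_\ell$-module of rank $d$ carrying $E$, and let $0\le m_1\le\dots\le m_d$ be the exponents of the elementary divisors of $E$, so that $M/EM\cong\bigoplus_i\ZZ/\ell^{m_i}\ZZ$. By definition the Hodge polygon of $E$ is the convex polygon with vertices $(i,h_i)$, where $h_i=\sum_{j=1}^{d-i}m_j$ for $0\le i\le d$ (convexity is immediate, since the successive slopes $h_{i+1}-h_i=-m_{d-i}$ are nondecreasing in $i$). Writing $f(t)=\det(E-t|M)=\sum_{i=0}^{d}f_it^i$, the standard expansion of the characteristic polynomial gives $f_i=(-1)^i\tr(\bigwedge^{d-i}E)$, the signed sum of the $(d-i)\times(d-i)$ principal minors of $E$. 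In particular $f_d=(-1)^d$, so $\ord_\ell(f_d)=0$, while $f_0=\det E$ has $\ord_\ell(\det E)=m_1+\dots+m_d=h_0$; hence $\Np_\ell(f)$ and the Hodge polygon of $E$ already share the endpoints $(0,h_0)$ and $(d,0)$.

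The key step is the inequality $\ord_\ell(f_i)\ge h_i$ for every $i$. By the Smith normal form there exist $P,Q\in\mathrm{GL}_d(\ZZ_\ell)$ with $E=PDQ^{-1}$ and $D=\diag(\ell^{m_1},\dots,\ell^{m_d})$. For $0\le j\le d$ this gives $\bigwedge^{j}E=(\bigwedge^{j}P)(\bigwedge^{j}D)(\bigwedge^{j}Q^{-1})$, where $\bigwedge^{j}P$ and $\bigwedge^{j}Q^{-1}$ have entries in $\ZZ_\ell$ and $\bigwedge^{j}D$ is diagonal with entries $\ell^{\sum_{k\in S}m_k}$ over the subsets $S$ of size $j$, each exponent being at least $m_1+\dots+m_j$. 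Pulling out this common power of $\ell$ exhibits $\bigwedge^{j}E$ as $\ell^{m_1+\dots+m_j}$ times a product of three matrices over $\ZZ_\ell$, so $\tr(\bigwedge^{j}E)\in\ell^{m_1+\dots+m_j}\ZZ_\ell$. Taking $j=d-i$ yields $\ord_\ell(f_i)\ge m_1+\dots+m_{d-i}=h_i$. (Equivalently, by the Cauchy--Binet formula every $(d-i)\times(d-i)$ minor of $E$ has valuation at least $m_1+\dots+m_{d-i}$.)

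Finally, $\Np_\ell(f)$ is by definition the lower convex hull of the points $(i,\ord_\ell(f_i))$, $0\le i\le d$, and by the previous step each of these points lies on or above the Hodge polygon of $E$, which on $[0,d]$ is the graph of a convex function. The region of $\RR^2$ lying on or above that graph is convex, hence contains the convex hull of the points $(i,\ord_\ell(f_i))$ and in particular its lower boundary $\Np_\ell(f)$; combined with the coincidence of endpoints, this is exactly the assertion. The one place I expect to need genuine care is the bookkeeping in the second and third paragraphs---fixing the sign in $f_i=(-1)^i\tr(\bigwedge^{d-i}E)$ and extracting the exact power $\ell^{m_1+\dots+m_j}$ from $\bigwedge^{j}D$---while the reduction and the final convexity argument are routine. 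Of course, since this is the classical ``Newton lies on or above Hodge'' estimate, one may instead simply invoke \cite{Ke2008} or \cite{BO}.
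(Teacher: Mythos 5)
Your proof is correct: the reduction to Smith normal form, the valuation bound $\ord_\ell(f_i)\ge m_1+\dots+m_{d-i}$ via exterior powers (equivalently Cauchy--Binet on minors), and the final convexity argument are all sound, and the sign and endpoint bookkeeping checks out against the paper's conventions ($f(t)=\det(E-t)$, so $f_d=(-1)^d$ and $f_0=\det E\neq 0$ by injectivity). The paper itself gives no proof of this theorem but only cites \cite{Ke2008} and \cite{BO}, and your argument is essentially the standard one found there, so nothing further is needed.
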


Recall that if $\ell=p$, then $f_A(t)=f_1(t)f_2(t)$, and
$f_2(t)\equiv t^{2g-d}\mod p$. Thus the only slope of
$\Np_p(f_2(1-t))$ is zero, and $\Np_p(f_A(1-t))$ equals to
$\Np_p(f_1(1-t))$ up to this zero slope.

\begin{corollary}{\label{rp1}}
Let $A$ be an abelian variety $A$ with Weil polynomial $f_A$. Then
$\Np_\ell(f_A(1-t))$ lies on or above $\Hp_\ell(A(k)_\ell)$, and
these polygons have same endpoints $(0,\ord_\ell(f_A(1))$ and
$(2g,0)$.
\end{corollary}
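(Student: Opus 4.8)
The plan is to deduce this from Proposition~\ref{prop1} together with the comparison theorem cited above (Newton polygon on or above Hodge polygon of an endomorphism), after a cosmetic reformulation in terms of the endomorphism $E=1-F$, and then to treat the place $\ell=p$ by a bookkeeping argument with zero slopes.

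\emph{Reformulation.} First I would put $E=1-F$. On $V_\ell(A)$ one has $\det(E)=\det(1-F\mid V_\ell(A))=f_A(1)=\#A(k)\neq 0$, so $1$ is not an eigenvalue of $F$ and $E$ is injective (and likewise $E$ is injective on $T_p(A)$, since $f_1(1)\mid f_A(1)$ is nonzero). Moreover, from $f_A(t)=\det(t-F)$ on the relevant module one gets $f_A(1-t)=(-1)^{\deg}\det(E-t)$; the overall sign is irrelevant because the Newton polygon only depends on the $\ell$-adic valuations of the coefficients. Hence $\Np_\ell(f_A(1-t))$ is literally the Newton polygon of the characteristic polynomial of $E$ in the sense of the cited theorem. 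Note also that $f_A(1-t)\in\ZZ[t]$ is monic, so all its coefficients have nonnegative $\ell$-adic valuation and the degree-$2g$ coefficient has valuation $0$; in particular every slope of $\Np_\ell(f_A(1-t))$ is $\leq 0$.

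\emph{Case $\ell\neq p$.} Here $T_\ell(A)$ is free over $\ZZ_\ell$ of rank $2g$ and $E=1-F$ is an injective endomorphism of it, so the cited theorem gives that $\Np_\ell(f_A(1-t))$ lies on or above the Hodge polygon of $E$ and that the two polygons have the same endpoints. By Proposition~\ref{prop1} the Hodge polygon of $E=1-F$ is $\Hp_\ell(A(k)_\ell,\rk T_\ell(A))=\Hp_\ell(A(k)_\ell,2g)=\Hp_\ell(A(k)_\ell)$. Its left endpoint is $(0,\ord_\ell(\det E))=(0,\ord_\ell(f_A(1)))$ and its right endpoint is $(2g,0)$ since $f_A(1-t)$ is monic. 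This is exactly the assertion for $\ell\neq p$.

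\emph{Case $\ell=p$.} Write $f_A=f_1f_2$ as in Section~$2$, with $f_1(t)=\det(t-F\mid T_p(A))$, $d=\deg f_1\le g$, and $f_2(t)\equiv t^{2g-d}\bmod p$. Applying the cited theorem and Proposition~\ref{prop1} to the injective endomorphism $E=1-F$ of the free $\ZZ_p$-module $T_p(A)$ of rank $d$ yields that $\Np_p(f_1(1-t))$ lies on or above $\Hp_p(A(k)_p,d)$ with common endpoints $(0,\ord_p(f_1(1)))$ and $(d,0)$. Since $f_2(1-t)\in\ZZ_p[t]$ is monic with constant term $f_2(1)\equiv 1\bmod p$ a $p$-adic unit, all its coefficients lie on or above the segment from $(0,0)$ to $(2g-d,0)$, so $\Np_p(f_2(1-t))$ is precisely that horizontal segment. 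By additivity of Newton polygons under multiplication, $\Np_p(f_A(1-t))$ is obtained from $\Np_p(f_1(1-t))$ by appending a horizontal segment of length $2g-d$ on the right (the slopes $\le 0$ of $f_1(1-t)$ come first, the zero slopes last). On the Hodge side, passing from $\Hp_p(A(k)_p,d)$ to $\Hp_p(A(k)_p)=\Hp_p(A(k)_p,2g)$ appends exactly the same horizontal segment of length $2g-d$ on the right, since the extra generators contribute zero slopes. Both modifications preserve the ``on or above'' relation, change the right endpoint from $(d,0)$ to $(2g,0)$, and leave the left endpoint at $(0,\ord_p(f_1(1)))=(0,\ord_p(f_A(1)))$ (because $f_2(1)$ is a unit, $\ord_p(f_A(1))=\ord_p(f_1(1))$). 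This proves the claim for $\ell=p$.

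\emph{Where the care is needed.} The argument is essentially formal once Proposition~\ref{prop1} and the cited comparison theorem are in hand; I do not expect a real obstacle. The only delicate point is the $p$-adic case: one must correctly match the Hodge polygon $\Hp_p(A(k)_p,\rk T_p(A))$ produced by Proposition~\ref{prop1} with the normalization $\Hp_p(A(k)_p,2g)$ used in the statement, verify that $\Np_p(f_2(1-t))$ is the all-zero-slopes segment, and check that appending this segment on both sides is consistent (which rests on all nonzero slopes being $\le 0$, as noted above). Signs and the distinction between $\det(t-E)$ and $\det(E-t)$ also need a one-line remark, but do not affect the Newton polygon.
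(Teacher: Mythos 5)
Your proof is correct and follows exactly the route the paper intends: the cited comparison theorem applied to $E=1-F$, Proposition~\ref{prop1} to identify the Hodge polygon of $E$ with $\Hp_\ell(A(k)_\ell,\rk T_\ell(A))$, and the remark preceding the corollary (the factorization $f_A=f_1f_2$ with $f_2(1-t)$ contributing only zero slopes) for $\ell=p$. The only cosmetic point is that with the paper's conventions $f_A(1-t)=\det\bigl((1-F)-t\bigr)$ holds with no sign at all, so your $(-1)^{\deg}$ hedge is unnecessary (though harmless, as you note).
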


Now we are ready to solve our linear algebra problem.

\begin{thm}{\label{rp2}}
Let $V$ be a $\QQ_\ell$-vector space of positive finite dimension
$d$, and let $E: V \to V$ be an invertible linear operator such
that the characteristic polynomial $f(t)=\det (E-t|V)$ lies in
$\ZZ_\ell[t]$. Suppose $f$ has no multiple roots. Let $G$ be an
abelian group of order $\ell^m$, where $m=\ord_\ell(f(0))$. If
$\Np_\ell(f)$ lies on or above $\Hp_\ell(G,d)$, then there exists
an $E$-invariant $\ZZ_\ell$-lattice $T$ of rank $d$ in $V$ such
that $T/ET\cong G$.
\end{thm}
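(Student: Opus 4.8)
The plan is to reduce to the case where $E$ acts irreducibly — more precisely, to decompose $V$ into $E$-stable subspaces on which the analysis is tractable — and then to build the lattice $T$ piece by piece so that its successive quotients realize the prescribed elementary divisors of $G$. Since $f$ has no multiple roots, $\QQ_\ell[E]$ is an étale $\QQ_\ell$-algebra, so $V = \bigoplus_j V_j$ where each $V_j = \QQ_\ell[t]/(f_j)$ is a field extension of $\QQ_\ell$ on which $E$ acts by multiplication by $t$, and $f = \prod_j f_j$. Correspondingly $\Np_\ell(f)$ is obtained by concatenating (sorting the slopes of) the $\Np_\ell(f_j)$; on each factor $V_j$ the only $E$-stable $\OO_{V_j}$-lattices are the fractional ideals, so the quotient $T_j/ET_j$ is cyclic of order $\ell^{\ord_\ell(\Nrd(E|V_j))} = \ell^{\ord_\ell(f_j(0))}$ — its Hodge polygon is forced, a single segment, and in fact coincides with $\Np_\ell(f_j)$. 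So decomposing along the rational prime factorization of $f$ is too coarse: it only produces groups $G$ whose Hodge polygon is already $\Np_\ell(f)$ itself.

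The real content is therefore a lattice-combinatorics argument inside a fixed $V_j$ (equivalently, one may as well assume $f$ is irreducible, or work with a single $\QQ_\ell$-factor that is itself a product of ramified pieces). Here the key idea is that although the $\OO$-lattices in a field give only cyclic quotients, we are allowed arbitrary $\ZZ_\ell$-lattices, and these have much more flexibility: given the target group $G = \bigoplus_{i=1}^d \ZZ/\ell^{m_i}\ZZ$ with $m_1 \le \dots \le m_d$, I want to produce a flag of $\ZZ_\ell$-lattices or, better, directly write down a lattice $T$ with $T/ET \cong G$. The natural device is to use the structure theorem for modules over the PID $\ZZ_\ell$ applied to $E$ acting on $T_\ell(A)$: choosing a $\ZZ_\ell$-basis $e_1,\dots,e_d$ of an auxiliary lattice, the cokernel of $E$ in the basis is governed by the Smith normal form, and the Hodge polygon records exactly the valuations of the diagonal entries. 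So the problem becomes: realize a prescribed Smith normal form $\diag(\ell^{m_1},\dots,\ell^{m_d})$ for the operator $E$ with respect to \emph{some} lattice, subject only to the constraint that the Newton polygon of $\det E$ lies on or above the corresponding Hodge polygon.

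The technical heart is an inductive construction peeling off one slope at a time. I would order the slopes of $\Np_\ell(f)$ and the $m_i$'s simultaneously and argue: because $\Np_\ell(f)$ lies on or above $\Hp_\ell(G,d)$ and they share endpoints, the largest slope $-m_d$ of the Hodge polygon does not exceed (in absolute value) the largest slope of the Newton polygon, so one can find an $E$-stable "saturated" sublattice or a suitable coordinate on which $E$ is divisible by $\ell^{m_d}$ but not more — concretely, pick a vector generating an $E$-submodule whose quotient is cyclic of the right order, split it off, and recurse on the complementary lattice of rank $d-1$ with the Newton polygon truncated by removing one segment. The bookkeeping that makes the induction close is exactly Theorem~\cite[4.3.8]{Ke2008}, which guarantees at each stage that the Newton polygon of the characteristic polynomial of $E$ restricted to the remaining lattice still lies above the shrunken Hodge polygon.

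\textbf{The main obstacle} I anticipate is the splitting step: over $\ZZ_\ell$ an $E$-stable sublattice need not have an $E$-stable complement, so "peeling off one slope" cannot literally be done by taking direct summands. The fix is to work not with sublattices but with the lattice one is constructing — i.e. to choose, for each target elementary divisor $\ell^{m_i}$, a vector $v_i \in V$ such that $\{v_i\}$ together with $\{E^{-1}$-scaled earlier vectors$\}$ spans a lattice $T$ with $E$ acting in "staircase" form — and to verify directly that $T$ is $E$-invariant of full rank and that $T/ET$ has the desired invariant factors. Making that choice requires the genericity coming from $f$ having no multiple roots (so that $V$ is a cyclic $\QQ_\ell[E]$-module, hence $v$ with $v, Ev, \dots, E^{d-1}v$ a basis exists), and then the inequality $\Np_\ell(f) \ge \Hp_\ell(G,d)$ is precisely what lets one rescale the companion-matrix basis by an appropriate diagonal matrix $\diag(\ell^{a_1},\dots,\ell^{a_d})$ — with $a_i$ partial sums read off from comparing the two polygons — to turn the Smith form of $E$ into $\diag(\ell^{m_1},\dots,\ell^{m_d})$; checking that the rescaled lattice is still $E$-stable (which needs the polygon inequality entry-by-entry, not just at the endpoints) is the calculation I would expect to occupy the bulk of the proof.
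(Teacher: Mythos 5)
The first half of your proposal (decompose $V$ along the prime factorization of $f$, or peel off one slope at a time inductively) is a detour: the paper's construction is direct and non-inductive, and your own observation that the irreducible-factor decomposition "is too coarse" already shows why you should not expect a recursion on slopes to close cleanly, since $E$-stable sublattices need not split off. Your final paragraph is where you converge on the right idea — use the cyclicity of $V$ as a $\QQ_\ell[E]$-module (so $V \cong \QQ_\ell[t]/(f)$), pick a cyclic basis, and rescale it by powers of $\ell$ dictated by the partial sums $M(s) = m_1 + \cdots + m_s$. That is indeed the architecture of the paper's proof.

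However, the specific basis you propose to rescale does not work, and this is a genuine gap rather than an unworked detail. If you take the monomial basis $1, x, x^2, \dots, x^{d-1}$ of $\QQ_\ell[t]/(f)$ and set $w_s = \ell^{-M(s)}x^s$, then $E w_s = \ell^{m_{s+1}} w_{s+1}$ for $s < d-1$ as desired, but $E w_{d-1} = \ell^{-M(d-1)}x^d = -\sum_{i<d}(a_i/a_d)\,\ell^{M(i)-M(d-1)}\,w_i$, so $E$-stability requires $\ord_\ell(a_i) \geq M(d-1) - M(i)$ for all $i$. Your hypothesis $\Np_\ell(f) \geq \Hp_\ell(G,d)$ gives instead $\ord_\ell(a_i) \geq M(d-i)$, and in general $M(d-i) \not\geq M(d-1)-M(i)$. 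Concretely, with $d=4$, $(m_1,\dots,m_4)=(0,0,1,1)$, and $f(t)=t^4+t^3+t^2+\ell t+\ell^2$, the polygon hypothesis holds but the monomial-rescaled lattice is not $E$-stable, since $\ord_\ell(a_2)=0 < M(3)-M(2)=1$. The paper's fix is to rescale not the monomials but the \emph{Horner partial sums}: it sets $v_s = \ell^{-M(s)}\bigl(a_d x^s + a_{d-1}x^{s-1}+\cdots+a_{d-s}\bigr)$, for which the identity $x v_{s-1} = \ell^{m_s}\bigl(v_s - a_{d-s}\ell^{-M(s)}\bigr)$ shows $E$-stability exactly under the hypothesis $\ord_\ell(a_{d-s})\geq M(s)$, and simultaneously exhibits $T/xT \cong \bigoplus \ZZ/\ell^{m_s}\ZZ$ because $v_s - u_s$ with $u_s = a_{d-s}\ell^{-M(s)}$ is again a $\ZZ_\ell$-basis of $T$ (using $u_d$ a unit). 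Identifying this twisted basis is the actual content of the proof; without it, the "rescale a cyclic basis" plan stalls at precisely the step you flagged as "the bulk of the proof."
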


\begin{corollary}{\label{rpc}}
We keep the notation of theorem~\ref{rp2}. Suppose $f$ has no
multiple roots. Then the group $G$ is isomorphic to $T/E T$ for
some $E$-invariant $\ZZ_\ell$-lattice $T$ of rank $d$ in $V$ if
and only if $\Np_\ell(f)$ lies on or above $\Hp_\ell(G,d)$, and
these polygons have same endpoints $(0,\ord_\ell(f(0))$ and
$(d,0)$.
\end{corollary}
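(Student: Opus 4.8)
The plan is to obtain Corollary~\ref{rpc} as a formal consequence of two results already in place: Theorem~\ref{rp2} for one implication, and the comparison theorem \cite[4.3.8]{Ke2008}, \cite[8.40]{BO} quoted just before Corollary~\ref{rp1} for the other. I would first handle the ``only if'' direction, then invoke Theorem~\ref{rp2} for the ``if'' direction, and finally verify the endpoint claim, which is pure bookkeeping.

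For the ``only if'' direction, suppose $T\subset V$ is an $E$-invariant $\ZZ_\ell$-lattice of rank $d$ with $T/ET\cong G$. Restriction gives an endomorphism $E'=E|_T\colon T\to T$ of the free $\ZZ_\ell$-module $T$ of rank $d$; it is injective because $E$ is invertible on $V\supseteq T$. Since $T\otimes_{\ZZ_\ell}\QQ_\ell=V$, the characteristic polynomial of $E'$ is $\det(E'-t\mid T\otimes\QQ_\ell)=\det(E-t\mid V)=f(t)$, and its cokernel is $T/E'T=T/ET\cong G$. By the definition given in Section~\ref{np}, the Hodge polygon of the endomorphism $E'$ is $\Hp_\ell(G,\rk T)=\Hp_\ell(G,d)$; the quoted comparison theorem then says precisely that $\Np_\ell(f)$ lies on or above $\Hp_\ell(G,d)$ and that the two polygons have the same endpoints.

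For the ``if'' direction I would simply apply Theorem~\ref{rp2}. Here one first observes that, for $\Hp_\ell(G,d)$ to be defined at all, $G$ must be generated by at most $d$ elements, and that the hypothesis ``$\Np_\ell(f)$ lies on or above $\Hp_\ell(G,d)$ with equal endpoints'' forces $|G|=\ell^m$ with $m=\ord_\ell(f(0))$, since $(0,\ord_\ell(f(0)))$ is the left endpoint of $\Np_\ell(f)$ and $(0,\ord_\ell|G|)$ is the left endpoint of $\Hp_\ell(G,d)$. With these data the hypotheses of Theorem~\ref{rp2} are met, and it produces the desired $E$-invariant lattice $T$ of rank $d$ with $T/ET\cong G$.

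It remains to record the endpoints. We have $f(0)=\det(E\mid V)$ and the coefficient of $t^d$ in $f(t)=\det(E-t\mid V)$ is $(-1)^d$, of $\ell$-adic valuation $0$, so $\Np_\ell(f)$ runs from $(0,\ord_\ell(f(0)))$ to $(d,0)$; and by the definition of the Hodge polygon of a group, $\Hp_\ell(G,d)$ runs from $(0,\ord_\ell|G|)$ to $(d,0)$. Since $\ord_\ell|G|=m=\ord_\ell(f(0))$ in either implication, the endpoints agree, which completes the argument. I do not anticipate any genuine obstacle: the whole substance of the statement is already isolated in Theorem~\ref{rp2} and the cited comparison theorem, so the only care needed is in matching up the notion ``Hodge polygon of the endomorphism $E'$'' with ``$\Hp_\ell(G,d)$'' and in the elementary endpoint computation.
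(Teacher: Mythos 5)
Your proposal is correct, and since the paper states Corollary~\ref{rpc} without proof, you have reconstructed exactly the argument the author clearly intends: the ``only if'' direction is the comparison theorem of Kedlaya/Berthelot--Ogus applied to the restriction $E|_T$ (whose characteristic polynomial is $f$ and whose Hodge polygon is $\Hp_\ell(G,d)$ by the definition preceding Proposition~\ref{prop1}), while the ``if'' direction is verbatim Theorem~\ref{rp2} once one notes the endpoint condition forces $|G|=\ell^{\ord_\ell(f(0))}$; the endpoint computation from the leading coefficient $(-1)^d$ and from $\sum m_j=\ord_\ell|G|$ is the same bookkeeping the paper records in Corollary~\ref{rp1} for the abelian-variety case.
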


\begin{proof}[Proof of theorem~\ref{rp2}]
Since $f$ has no multiple roots, there is an isomorphism of
$\QQ_\ell$-vector spaces $V\cong\QQ_\ell[t]/f(t)\QQ_\ell[t]$ such
that $E$ becomes multiplication by $t$ in
$\QQ_\ell[t]/f(t)\QQ_\ell[t]$. Let $x$ be an image of $t$ in
$\QQ_\ell[t]/f(t)\QQ_\ell[t]$, and let $R=\ZZ_\ell[x]$ be the
$\ZZ_\ell$-subalgebra of $\QQ_\ell[t]/f(t)\QQ_\ell[t]$ generated
by $x$. Then $R$ is a $\ZZ_\ell$-lattice in
$\QQ_\ell[t]/f(t)\QQ_\ell[t]$; in particular, the natural map
$R\otimes_{\ZZ_\ell}\QQ_\ell\to\QQ_\ell[t]/f(t)\QQ_\ell[t]$ is an
isomorphism of $\QQ_\ell$-vector spaces. We have to find an
$R$-submodule $T\subset R\otimes_{\ZZ_\ell}\QQ_\ell$ of
$\ZZ_\ell$-rank $d$ such that $x$ acts on $T$ with Hodge polygon
$\Hp_\ell(G,d)$.

Suppose $G=\oplus_{i=1}^d\ZZ/\ell^{m_i}\ZZ$, where $m_1\leq
m_2\leq\dots\leq m_d$ (recall that $m_i$ may be zero for some
$i$). First we construct certain elements $v_0,\dots, v_d\in
R\otimes_{\ZZ_\ell}\QQ_\ell$. Let us put $M(0)=0$, and
$M{(s)}=\sum_{i=1}^s m_i$ for $s\geq 1$. Let $f(t)=\sum_{i=0}^d
a_it^i$. We let
$$v_s=\frac{a_dx^s+\sum_{j=1}^s a_{d-j}x^{s-j}}{\ell^{M{(s)}}},$$
in particular, $v_d=f(x)/\ell^m=0$. Second, we define $T$ as the
$\ZZ_\ell$-submodule of $R\otimes_{\ZZ_\ell}\QQ_\ell$ generated by
$d$ elements $v_0, \dots , v_{d-1}$. Note that $v_0,v_2,\dots,
v_{d-1}$ have different degrees viewed as polynomials in $x$ (and
all degrees are strictly less than $d$), and hence form a basis of
$T$ over $\ZZ_\ell$. In particular, $T$ is a free
$\ZZ_\ell$-module of rank $d$. Notice that $v_0=a_d=(-1)^d.$ In
particular, $T$ contains $\ZZ_\ell\cdot 1$.

Now we prove that $T$ is an $R$-submodule of
$R\otimes_{\ZZ_\ell}\QQ_\ell$. The point $(d-s,{M{(s)}})$ is a
vertex of $\Hp_\ell(G,d)$. By assumption $\Np_\ell(f)$ lies on or
above $\Hp_\ell(G,d)$; thus $(d-s,{M{(s)}})$ is not higher than
$\Np_\ell(f)$. It follows that $\ell^{M{(s)}}$ divides $a_{d-s}$,
and
$$u_s=\frac{a_{d-s}}{\ell^{M{(s)}}}\cdot 1\in \ZZ_\ell\cdot 1\subset T,$$
thus for $s\geq 1$
\begin{equation}\label{eq2}xv_{s-1}=\ell^{m_s}(v_s-u_s)\in \ell^{m_s}T.
\end{equation}
This proves that $xT\subset T$.

Recall that, $v_0=a_d=(-1)^d$; clearly $m=\ord_\ell(\det
E)=\ord_\ell(a_0)$; thus $u_d=a_0/\ell^m$ is invertible in
$\ZZ_\ell$, and $$v_1-u_1,\dots, v_d-u_d=-u_d$$ is a basis of the
free $\ZZ_\ell$-module $T$. This gives us the natural surjective
map $\ZZ^d\to T/xT$. It follows from (\ref{eq2}) that this map
factors through a surjective map $G \to T/xT$. We conclude that
$T/x T\cong G$, since orders of both groups are equal to $\ell^m$.
\end{proof}

\begin{proof}[Proof of theorem~\ref{rp}.]
The ``if'' part follows from Corollary~\ref{rp1}. Let us prove the
``only if'' part.

For a given prime number $\ell$ and an abelian variety $A'$
isogenius to $A$ we let $V=V_\ell(A')$ and $E=1-F$. If $\ell\ne
p$, we let $f(t)=f_A(1-t)$, and if $\ell=p$, we let
$f(t)=f_1(1-t)$. By Theorem~\ref{rp2} there exists an $E$-invarint
lattice $T$ in $V$ such that $T/ET\cong G_\ell$. Clearly, $T$ is
$F$-invariant, and by Lemma~\ref{lem_on_Tate_module} there exists
an abelian variety $B'$ and an $\ell$-isogeny $B'\to A'$ such that
$T_\ell(B')\cong T$. By Proposition~\ref{prop1} we have
$G_\ell\cong B'(k)_\ell$.

Let $\ell_1,\dots,\ell_s$ be the set of prime divisors of
$f_A(1)$. It follows that there exists a sequence of isogenies
$$B=B_s\xrightarrow{\phi_s} B_{s-1}\to\dots\xrightarrow{\phi_2}
B_1\xrightarrow{\phi_1} A$$ such that $\phi_i:B_i\to B_{i-1}$ is
an $\ell_i$-isogeny and
$$B_i(k)_{\ell_i}\cong G_{\ell_i}.$$ Since $\phi_i$ is an
$\ell_i$-isogeny, $T_\ell(B_i)\cong T_\ell(B_{i-1})$ for any
$\ell\neq\ell_i$. Thus $B(k)\cong G$.
\end{proof}

\section{Noncommutative endomorphism algebras}

If $\End A$ is not commutative then we may apply the following
construction. Let $f_A=\prod_{j=1}^{s} f_j$, where all $f_i$ are
polynomials with integer coefficients, and $f_j$ divides
$f_{j-1}$. Suppose $f_j$ has no multiple roots for $1\leq j\leq
s$. Let $G_j$ be a family of abelian $\ell$-groups for $1\leq
j\leq s$ such that $\Np_\ell(f_j(1-t))$ lies on or above
$\Hp_\ell(G_j,\deg f_j)$. By Theorem~\ref{rp2} and
Lemma~\ref{lem_on_Tate_module}, we can construct modules $T_j$ and
an abelian variety $B$ with Tate module $T_\ell(B)=\oplus T_j$
such that $B(k)_\ell\cong\oplus G_j$. We have the following
conjecture.

\begin{con}
Let $f_A=\prod_{j=1}^{s} f_j$, where $f_j$ divides $f_{j-1}$, and
suppose that $f_j$ has no multiple roots for $1\leq j\leq s$. If
$\deg f_j\leq 2$ for all $j$, then $A(k)_\ell\cong \oplus G_j$,
where $G_j$ are $\ell$-primary abelian groups such that
$\Np_\ell(f_j(1-t))$ lies on or above $\Hp_\ell(G_j,\deg f_j)$ for
all $1\leq j\leq s$.
\end{con}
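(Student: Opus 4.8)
The plan is to reduce the conjecture, just as in the proof of Theorem~\ref{rp}, to a statement about the single lattice $T_\ell(A)$, and then to exploit the hypothesis $\deg f_j\leq 2$ through the module theory of the order $\ZZ_\ell[F]$. Fix a prime $\ell$. By Proposition~\ref{prop1} it suffices to analyse the finite group $T/(1-F)T$ for $T=T_\ell(A)$ (the physical Tate module when $\ell=p$) together with the action of $F$. If $\ell=p$, then on $T_p(A)\otimes_{\ZZ_p}\QQ_p$ only the $p$-adic unit Weil roots of $f_A$ survive, so in the three admissible shapes below either $T_p(A)=0$ (the supersingular cases) or $F$ acts as a single $p$-adic unit scalar; hence this case is elementary, and we assume henceforth $\ell\neq p$. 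Since $F$ acts semisimply on $V=V_\ell(A)$, its minimal polynomial is $f_1$, the radical of $f_A$, and $R:=\ZZ_\ell[F]\cong\ZZ_\ell[t]/(f_1(t))$ is a $\ZZ_\ell$-order in an \'etale $\QQ_\ell$-algebra of dimension $\deg f_1\leq 2$. Exactly three shapes occur: $f_A=(t-a)^{2g}$ with $a\in\ZZ$ (so $F$ is a scalar); $f_A=\pi^{g}$ with $\pi$ quadratic irreducible; or $f_A=(t-a)^{e_1}(t-b)^{e_2}$ with $a\neq b$ in $\ZZ$, $e_1+e_2=2g$, and say $e_1\geq e_2$. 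The decreasing factorization $f_A=\prod_{j=1}^{s}f_j$ is then explicit: all $f_j=(t-a)$ in the first case; all $f_j=\pi$ in the second; and $f_j=(t-a)(t-b)$ for $j\leq e_2$, $f_j=(t-a)$ for $e_2<j\leq e_1$, in the third.

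The key structural input is that $R=\ZZ_\ell[t]/(f_1)$ with $\deg f_1\leq 2$ is a \emph{Bass ring}: every ring between $R$ and its normalization $\widetilde R$ is again of the form $\ZZ_\ell[t]/(\text{monic of degree}\leq 2)$, hence a complete intersection, in particular Gorenstein, and these intermediate rings form a finite chain $R=R_0\subset R_1\subset\dots\subset R_c=\widetilde R$. By the classical structure theory of torsion-free modules over Bass orders (Bass; Borevich--Faddeev for quadratic orders; Levy--Wiegand in general), and because $\ZZ_\ell$ is complete, $T_\ell(A)\cong\bigoplus_i M_i$, where each $M_i$ is one of finitely many indecomposable rank-one torsion-free $R$-modules: the rings $R_i$ (free of rank $2$ over $\ZZ_\ell$), and, when $f_1$ is reducible over $\QQ_\ell$, the two ``one-sided'' modules $\ZZ_\ell^{(\alpha)},\ZZ_\ell^{(\beta)}$ on which $F$ acts by the two roots of $f_1$ in $\ZZ_\ell$ (free of rank $1$ over $\ZZ_\ell$). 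Consequently $T_\ell(A)/(1-F)T_\ell(A)\cong\bigoplus_i M_i/(1-F)M_i$, and on each $M_i$ the operator $1-F$ has characteristic polynomial $f^{(i)}(1-t)$ for a monic divisor $f^{(i)}$ of $f_1$ of degree $\rk_{\ZZ_\ell}M_i\in\{1,2\}$.

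It remains to regroup the summands to match the decreasing factorization: one wants a partition of the index set into $I_1,\dots,I_s$ with $\prod_{i\in I_j}f^{(i)}=f_j$ and $\sum_{i\in I_j}\rk_{\ZZ_\ell}M_i=\deg f_j$ for every $j$. Granting such a partition, set $G_j=\bigoplus_{i\in I_j}M_i/(1-F)M_i$; then $1-F$ is an injective endomorphism of the free $\ZZ_\ell$-module $\bigoplus_{i\in I_j}M_i$ of rank $\deg f_j$ with characteristic polynomial $f_j(1-t)$, so the comparison theorem of \cite{Ke2008} and \cite{BO} recalled in Section~\ref{np} shows that $\Np_\ell(f_j(1-t))$ lies on or above $\Hp_\ell(G_j,\deg f_j)$; and $\bigoplus_j G_j=T_\ell(A)/(1-F)T_\ell(A)\cong A(k)_\ell$ by Proposition~\ref{prop1}, which is precisely the assertion of the conjecture for $A$.

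The main obstacle is exactly this regrouping. For $f_A=(t-a)^{2g}$ it is trivial; for $f_A=\pi^{g}$ with $\pi$ irreducible over $\QQ_\ell$ every $M_i$ has $f^{(i)}=\pi=f_1$ and $\rk_{\ZZ_\ell}M_i=2$, matching every $f_j$, while in the split sub-case $\pi=(t-\alpha)(t-\beta)$ over $\QQ_\ell$ the two one-sided modules occur with equal multiplicity — because the $\QQ_\ell$-rank of $T_\ell(A)$ on each of the two factors of $R\otimes\QQ_\ell$ equals $g$ — so one pairs each $\ZZ_\ell^{(\alpha)}$ with a $\ZZ_\ell^{(\beta)}$ to fill a quadratic slot. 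The delicate case is $f_A=(t-a)^{e_1}(t-b)^{e_2}$, where $T_\ell(A)\cong R_{i_1}\oplus\dots\oplus R_{i_r}\oplus(\ZZ_\ell^{(a)})^{\oplus u}\oplus(\ZZ_\ell^{(b)})^{\oplus v}$ with $r+u=e_1$ and $r+v=e_2$, so $u\geq v$, while there are $e_2$ quadratic slots $(t-a)(t-b)$ and $e_1-e_2$ linear slots $(t-a)$; one pairs the $v$ copies of $\ZZ_\ell^{(b)}$ with $v$ of the $u$ copies of $\ZZ_\ell^{(a)}$, which together with the $R_{i_k}$ fill the quadratic slots, leaving $u-v=e_1-e_2$ copies of $\ZZ_\ell^{(a)}$ for the linear slots — and it is the inequality $e_1\geq e_2$ that makes the count close. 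Finally one should confirm that $\deg f_j\leq 2$ is genuinely essential: it is what makes $R$ a Bass ring, so that lattices decompose into rank-one pieces, a property that breaks down as soon as the radical of $f_A$ has degree $\geq 3$.
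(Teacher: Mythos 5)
First, a point of order: the paper does not prove this statement at all --- it is stated as Conjecture~1, with only the case of simple abelian surfaces attributed to Xing and a degree-$3$ counterexample showing the naive generalization fails. So there is no proof of the author's to compare yours with; what follows is an assessment of your argument on its own terms.

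Your route is sound, and I believe it proves the conjecture. The reduction to the single lattice $T_\ell(A)$ via Proposition~1, the identification $\ZZ_\ell[F]\cong\ZZ_\ell[t]/(f_1)$ with $f_1$ the radical of $f_A$ of degree at most $2$, the appeal to the Bass/Borevich--Faddeev classification of lattices over a quadratic $\ZZ_\ell$-order (over the complete local base the indecomposables are exactly the intermediate orders $R_i$ together with, in the split case, the two eigenlattices), the regrouping of rank-one summands to match the elementary-divisor factorization $f_A=\prod_j f_j$, and the blockwise application of the Newton-above-Hodge theorem of Section~3 are all correct; I checked that the counting in the regrouping closes in each of the three shapes of $f_A$ (the inequality $e_1\geq e_2$ and the equality $u=v$ forced by equal eigenspace dimensions are precisely what is needed). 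Your diagnosis that $\deg f_1\leq 2$ is what makes $\ZZ_\ell[F]$ a Bass order, hence what forces lattices to split into rank-one pieces, correctly explains why the paper's example with $\deg f_1=3$ breaks the pattern. To turn the sketch into a complete proof you should: (a) state and cite precisely the structure theorem for torsion-free modules over quadratic $\ZZ_\ell$-orders, including that each non-maximal intermediate order is local with trivial Picard group, so every full-rank indecomposable is isomorphic to some $R_i$ as an $R$-module; (b) flesh out the $\ell=p$ case --- all roots of $\det(t-F|T_p(A))$ are $p$-adic units by the decomposition $f_A=f_1f_2$ with $f_2\equiv t^{2g-d}\bmod p$ recalled in Section~2, so $F$ minus the non-unit root of $\pi$ is invertible on $V_p(A)$ and $F$ acts there as the unit-root scalar; and (c) note that $1-F$ is injective because no root of $f_A$ equals $1$. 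None of these is an obstacle, so what you have is, in outline, a valid proof of a statement the paper leaves open.
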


This conjecture is proved in~\cite{Xi1} for simple abelian
surfaces. However, there is an example of the group of points on
an abelian variety $A$ with $\deg f_1=3$, and $\deg f_2=1$ such
that this group is not a direct sum of two groups $G_1$ and $G_2$
such that $\Np_\ell(f_j(1-t))$ lies on or above $\Hp_\ell(G_j,\deg
f_j)$ for $j=1,2$. Let us consider the Weil polynomial
$f(t)=(t^2-2t+9)(t+3)^2$, and let $A$ be an abelian surface such
that $f_A=f$. Then $f(1-t)=(t^2+8)(t-4)^2$. Let $v_1,v_2,v_3,v_4$
be a basis of $V_2(A)$ such that $(1-F)v_1=2v_2$,
$(1-F)v_2=-4v_1$, $(1-F)v_3=4v_3$, and $(1-F)v_4=4v_4$. Let $T$ be
the $\ZZ_2$-submodule of $V_2(A)$ generated by $u_1=v_1+v_3,
u_2=-4v_2+4v_3,u_3=v_2+v_4$, and $u_4=4v_1+2v_4$. By
Lemma~\ref{lem_on_Tate_module} there exists an abelian surface $B$
such that $T_2(B)\cong T$. We claim that
$$B(\FF_9)_2\cong\ZZ/8\ZZ\oplus\ZZ/16\ZZ.$$ Indeed,
\begin{gather*}
(1-F)u_1=4u_1+2u_3-u_4\in T\\
(1-F)u_2=16u_3\in T\\
(1-F)u_3=4u_1-u_2+4u_3\in T\\
(1-F)u_4=8u_1\in T.\\
\end{gather*}

Theorem~\ref{rp} and Conjecture~$1$ allows one to classify the
groups of points on simple abelian surfaces. However the author
does not know even a conjectural classification of groups of
points on nonsimple surfaces.

\end{document}